\newcommand{\B}[1]{{\mathbf#1}} 
\newcommand{\C}[1]{{\mathcal#1}} 
\theoremstyle{plain}
\newtheorem{prop}{Proposition}[section]
\theoremstyle{definition}
\newtheorem{defin}{Definition}[section]
\theoremstyle{definition}
\newtheorem{example}{Example}[section]
\theoremstyle{remark}
\newtheorem{rem}{Remark}[section]
\begin{document}
\title{On Hodge Structures and Periods - Part I: Algebraic Aspects}
\author{Lucian M. Ionescu}
\address{Department of Mathematics, Illinois State University, IL 61790-4520}
\email{lmiones@ilstu.edu}
\date{Sept. 2020} 
\keywords{Hodge structure, Periods, Motives}

\begin{abstract}
We review Hodge structures, relating filtrations, Galois Theory and Jordan-Holder structures.

The prototypical case of periods of Riemann surfaces is compared with the Galois-Artin framework of algebraic numbers.
\end{abstract}

\maketitle
\setcounter{tocdepth}{3} 
\tableofcontents

\section{Introduction}\label{S:Intro}
Periods \cite{KZ} are also ``numbers'' computed as Feynman integrals accurately representing scattering amplitudes measured in particle accelerator experiments \cite{QM:Periods}.
To understanding their significance from a mathematical point off view, a comparison to algebraic numbers, also periods, but much better understood, is in order.
Yet the key structure involved is that of Hodge structure, which is reviewed from several directions: Hodge decomposition, filtration and representations.

While first sections are elementary and expository, the last part targets nc-Hodge structures in a speculative way. 
\cite{Simpson:HFnac, Corlette} ``allows for non-linearity at the substrate of Hodge structure'' \cite{KKP}, p.6, following the lessons from non-abelian cohomology and aiming to study the representations of the fundamental group, while \cite{KKP} generalizes the local systems idea, and relying on Rees construction to relate classical (abelian) and nc-Hodge structures in a functorial way,
compatible with the theory of variations.

Yet studying separately the topological structures (down-stairs, on the base) and algebraic structures (upstairs, on the fiber), {\em first}, before studying their interaction in “Differential/Algebraic Geometry”, perhaps clarifies some options when designing the theory.

Since classical Hodge structures, as representations of $C^*$, extend the complex structure given as a representation of $SO(2)$, the  author proposes a direct extension of such a ``loxodromic'' mix of rotations with a radial grading, to actions of $SL(2,C)$:
$$\xymatrix{ 
Hodge-Higgs          & Hodge\ Str.: \ C^*\ar@{^(->}[r]^{ } & SL(2,C)\\
\ar@{->}[u]^{Non-compact} Yang-Mills          & Complex\ Str.: \ U(1) \ar@{^(->}[r]^{\quad \quad \quad Hopf} & SU(2)\\
 \ar@{-}[u]^{Compact}\quad \ar[rr]^{\quad Abelian\quad to\quad Non-abelian} & &
 }$$
The above arrows relate to historical developments around Hodge Theory \cite{Wiki:NAHC} (see also the diagram from \S \ref{S:Conclusions}).

One may interpret this generalization as passing from genus zero mapping class group $C^*=MPG(S^2)$ to genus one $SL(2,C)=MCG(S^1\times S^1$), i.e. corresponding to the compactification of $R^{(2,0)}$ or the compactification of the conformal plane $R^{1,2}$ \cite{Conf-Geom}
\footnote{Unfortunately in low dimensions there are plenty of accidental isomorphisms, to rely on such a ``coincidence''; comments from the reader are welcome.}.

\section{Pure Hodge Structures}
Hodge decomposition, leading to Hodge structure and beyond to motives, originated from a duality first noted in Electromagnetism, formulated by Grassmann via complements, and later by Minkowski in his relativistic formulation of EM.

Subsequently it was set on firm foundations by Hodge, building on the work by de Rham in the context of complex manifolds.

As expected in Mathematics, an abstract algebraic structure was eventually extracted, called {\em Hodge structure}.
There are three main definitions, as starting points, with more or less direct connections\cite{Intro-HS}.

%
%
\subsection{Pure Hodge Structures}
Following \cite{Intro-HS}, \S2, we analyze the roles of various algebraic structures involved, with the following prospecting ideas in mind:
filtrations go together with Galois Theory and Jordan-Holder Theorem. while the complex conjugation as an involution is reminiscent of Cartan involutions (see Wiki), yielding various real forms of Lie algebras associated to one complex Lie algebra.

\subsubsection{Definitions}\label{HS-definitions}
First let us consider the three main definitions of a pure Hodge structure, and only then comment on the various relations and connections with other theories.

{\bf HS Definition I.}
A {\em pure Hodge structure of weight $n\in Z$}, denoted $(H_\B{Z}, \{H^{p,q}\}_{p+q=n}$, consists of a finitely generated free abelian group $H_\B{Z}$ (Hodge lattice, for short) together with a decomposition of its complexification, compatible with complex conjugation:
$$H_C:= H_Z \otimes_Z C = \oplus_{p+q=n} H^{p,q}, \quad \bar{H^{p,q}}=H^{q,p},$$.

\vspace{.1in}
Alternative to this definition based on an explicit ``block'' decomposition, generalizing the concept of grading, it is common to define a Hodge structure via a filtration.

{\bf HS Definition II}.
A {\em (pure) Hodge structure of weight $n\in \B{Z}$}, denoted $(H_\B{Z}, \{F^p\}$, consists in a lattice $H_\B{Z}$ (as above), together with a filtration of its complexification, with conjugation acting as a complement \footnote{Reminiscent of Grassmann complements and Hodge *-duality: \cite{Grassmann-complements}.}:
$$H_C=F^0\supset F^1 \cdots \supset F^n \supset 0,  \quad \bar{F^p} \oplus F^q\cong H_C.$$

Note that in the 2nd definition the isomorphism is not part of the data.

\vspace{.1in}
It is immediate that the two definitions are equivalent, for instance by considering the canonical filtration associated to the decomposition: 
$$F^p=H^{n,0}\oplus ... \oplus H^{p,n-p}, \ p=0..n.$$
Both structures will be referred to as Hodge filtration / Hodge decomposition of the corresponding Hodge structure.

%
\vspace{.1in}
Additional insight comes from considering the 3rd definition of a Hodge structure in terms of representation Theory.
This should not be very surprising if we recall that complex numbers, from a geometric point of view, are just a fundamental representation of the group of rotations $SO(2)$.
Below we will follow \cite{Geeman}, p.1.

But first, some preliminaries.

The multiplicative complex numbers are viewed as the group of rotations and homoteties
through the identification of the algebraic form $z=a+ib$ with the geometric form: $$\B{C}^*=\{s(a,b):=
\begin{pmatrix} a & b \\ -b & a  \end{pmatrix} \in GL(2, \B{R)}: a^2+b^2\ne 0\}, \quad z=a+ib\mapsto s(a,b).$$ 
Note that the eigenvalues of $s(a,b)$ are complex conjugate $z=a+ib$ and $\bar{z}=a-ib$, with eigenspaces in the complexification of the fundamental representation used above:
$$GL(2,\B{C})=GL(2,\B{R}) \otimes_\B{R} \B{C}.$$
When defining the Hodge structure below, not only the lattice structure is again omitted at this point (when comparing with Definition I and II), but the emphasis will be on the {\em real form} of Hodge structure;
the passage to the complexified (form of a real) Hodge structure, satisfying the usual 
compatibility condition with complex conjugation (Galois group) comes ``for free'' when extending coefficients from $\B{R}$ to $\B{C}$, via $\cdot \otimes_\B{R}\C{B}$.

\begin{defin}(\cite{Geeman}, p.1)
An {\em algebraic representation} of the group $C^*$ is a representation $h:C^*\to GL(V_\B{R})$ such that the matrix coefficients are real coefficients polynomials in 
$a,b$ and $(a^2+b^2)^{-1}$.

It is said to be {\em real of weight $k$} if its restriction to $R_+$ is $h(t)=t^k$. 
\end{defin}
\begin{rem}\label{R:complexification}
Complexifying the representation, i.e. following it by inclusion $GL(V_R)\to GL(V_C)$, corresponding to the complexification of the real vector space, yields a matrix representation with entries polynomials in $z, \bar{z}$ and $(z\bar{z})^{-1}$. 
\end{rem}

\begin{example}
The fundamental representation of $C^*$ on $R^2$ is algebraic of weight $k=1$.

In general, a real vector space comes with an obvious action of $R_+$, hence its complexification acquires a $C^*$ algebraic representation via the $SO(2)$ action from the complex structure $J^2=-Id$, or ``multiplication by $i$, corresponding to the $(v,w)\mapsto (-w, v)$, typical of most ``doubling constructions''. 
\end{example}

\vspace{.1in}
We are now in position to provide the main definition of a Hodge structure, as pertaining to the goals targeted in this article.

{\bf HS Definition III.} A {\em rational Hodge structure} of weight $k\in \B{Z}$ is a $\B{Q}$-vector space $V$ together with an {\em algebraic representation} $h:C^* \to GL(V_\B{R})$, of the multiplicative group $C^*$.

\vspace{.1in}
\begin{rem}
Here the emphasis is, as we will see, on the Hodge decomposition, not on the underlying lattice
$H_Z$, as in Definitions I and II.

Note also that a {\em real} vector space $V_\B{R}$ comes by definition with the $(R_+,\cdot)$ action on the abelian group ($\B{Z}$-module), hence a complexification $V_\B{R}\otimes \B{C}$ endows it with a canonical complex structure, or equivalently an $SO(2)$ action.

With hindsight, the rich structure of the complex numbers (complex, symplectic, Kahler, Hodge), as an extension of the reals, comes from several ``places'':
$$\B{Z}\to (Z[i],+) \to (\B{C}, +), \quad (\B{C}, cdot)=(R_+,\cdot) \times SO(2;\B{R}).$$
\end{rem}

The correspondence between Hodge decompositions (from Def. I or II) and representations (Def. III), perhaps not so ``well known'' as it should, can be found in \cite{Geeman}, 1.4 Prop., p.2.

\begin{prop}
Given a rational vector space $V_Q$, 
there is a bijection (morphism?) between {\em algebraic representations of weight} $k$:
$$h:\B{C}^* \to GL(V_R), h(t)=t^k, t\in R$$
and rational Hodge structures $\{V^{p,q}\}_{p+q=k}$ of weight $k$,
such that the $(p,q)$ Hodge decomposition summand, denoted $V_{p,q}$, is the eigenspace of the associated $C^*$-automorphism /character $\lambda^{p,q}(z)=z^p\bar{z}^q$:
$$V^{p,q}=\{v\in V_C : h(z)v=\lambda^{p,q}v\}.$$
\end{prop}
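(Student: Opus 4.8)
The plan is to exhibit the bijection by constructing maps in both directions and checking they are mutually inverse. The essential algebraic input is that $\B{C}^*$, regarded as a real algebraic group (the Weil restriction $\mathrm{Res}_{\B{C}/\B{R}}\mathbb{G}_m$, i.e. the Deligne torus), is a torus whose algebraic characters are precisely the maps $\lambda^{p,q}(z)=z^p\bar z^q$ for $(p,q)\in\B{Z}^2$; consequently any algebraic representation is, after complexification, a direct sum of such characters, since representations of a diagonalizable torus are semisimple and decompose into weight spaces.

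Going from a representation to a Hodge structure, I would first complexify $h$ to $h_\B{C}:\B{C}^*\to GL(V_\B{C})$ as in Remark \ref{R:complexification}, and decompose $V_\B{C}=\bigoplus_{p,q}V^{p,q}$ into the eigenspaces $V^{p,q}=\{v:h_\B{C}(z)v=\lambda^{p,q}(z)v\}$ afforded by the character decomposition above. The weight hypothesis $h(t)=t^k$ for $t\in\B{R}_+$ pins down the bidegrees: since $\lambda^{p,q}(t)=t^{p+q}$ for real positive $t$, only summands with $p+q=k$ can be nonzero, giving the weight-$k$ grading. The reality of $h$ — the fact that $h(z)$ lies in $GL(V_\B{R})$ and hence commutes with the conjugation $\sigma$ on $V_\B{C}=V_\B{R}\otimes_\B{R}\B{C}$ — yields the symmetry $\overline{V^{p,q}}=V^{q,p}$: applying $\sigma$ to $h_\B{C}(z)v=z^p\bar z^q v$ and using $\sigma h_\B{C}(z)=h_\B{C}(z)\sigma$ together with $\sigma(z^p\bar z^q v)=z^q\bar z^p\,\sigma(v)$ shows $\sigma(v)$ is a $\lambda^{q,p}$-eigenvector. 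This is exactly Definition I of a Hodge structure.

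For the reverse direction, starting from $\{V^{p,q}\}_{p+q=k}$ with $\overline{V^{p,q}}=V^{q,p}$, I would define $h(z)$ to act on $V^{p,q}$ as the scalar $\lambda^{p,q}(z)=z^p\bar z^q$ and extend by linearity to $V_\B{C}$. Two things must be checked: that $h(z)$ actually preserves the real form $V_\B{R}$, which follows from $\overline{V^{p,q}}=V^{q,p}$ since then $h(z)$ commutes with $\sigma$ and so restricts to a real operator; and that the resulting representation is algebraic in the sense of the Definition, which holds because on each block the matrix coefficients are, after writing $z=a+ib$, polynomials in $a,b$ and $(a^2+b^2)^{-1}=(z\bar z)^{-1}$ (the inverse powers appearing only when $p$ or $q$ is negative). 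Finally I would verify that the two constructions invert each other: the eigenspace decomposition of the $h$ built from a Hodge structure returns the original $V^{p,q}$, and reading off the eigenvalues of the $h$ recovered from a representation returns that same $h$.

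I expect the main obstacle to be the foundational claim that an algebraic representation of $\B{C}^*$ is automatically semisimple with weight spaces indexed exactly by the characters $z^p\bar z^q$; once the diagonalizability and the classification of characters of the Deligne torus are in hand, the weight condition and the conjugation compatibility are essentially bookkeeping. A secondary subtlety worth spelling out is the precise matching of the reality structure: one must keep straight that conjugation on $V_\B{C}$ swaps $\lambda^{p,q}$ with $\lambda^{q,p}$ rather than fixing it, which is what produces the defining symmetry $\overline{V^{p,q}}=V^{q,p}$ rather than a spurious self-duality.
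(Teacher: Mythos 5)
Your proposal is correct, and its forward direction coincides with the paper's: complexify $h$, decompose into simultaneous eigenspaces, use polynomiality to see the characters are the monomials $z^p\bar z^q$, use the restriction to $\B{R}_+$ to force $p+q=k$, and use reality of $h$ to get $\overline{V^{p,q}}=V^{q,p}$. Where you genuinely diverge is the converse. The paper works \emph{downstairs} in the real form: it splits each $V^{p,q}$ (for $p>q$) into real and imaginary parts and defines $h$ by an explicit block formula, a rotation matrix to the power $p-q$ scaled by a power of $r$, mirrored on the conjugate block and acting by a pure scalar on $V^{p,p}$. You instead work \emph{upstairs} in $V_{\B{C}}$: let $h(z)$ act on $V^{p,q}$ by the scalar $z^p\bar z^q$, then observe that the symmetry $\overline{V^{p,q}}=V^{q,p}$ makes $h(z)$ commute with the conjugation $\sigma$, so it restricts to a real operator on $V_{\B{R}}$. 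Your route buys two things: it avoids choosing bases and the attendant exponent bookkeeping (note the paper's scaling factor $r^{2(p+q)}$ appears to be a slip --- acting by $z^p\bar z^q=r^{p+q}e^{i(p-q)\theta}$ requires $r^{p+q}$, or the weight comes out as $2k$ rather than $k$), and it makes the verification that the two constructions are mutually inverse essentially tautological, since both directions are phrased as eigenspace statements on $V_{\B{C}}$. What the paper's construction buys is concreteness: one sees the real form explicitly as rotation--dilation blocks, which is in the spirit of its ``conformal action'' viewpoint. One further point in your favor: you correctly isolate the real foundational input --- that an \emph{algebraic} representation of the Deligne torus is semisimple with characters exactly $z^p\bar z^q$ --- whereas the paper derives simultaneous diagonalizability from commutativity alone, which is insufficient without algebraicity (e.g.\ $z\mapsto\bigl(\begin{smallmatrix}1&\log|z|\\0&1\end{smallmatrix}\bigr)$ is an abelian but non-diagonalizable representation of $\B{C}^*$); spelling that lemma out, as you propose, is the right way to close the gap.
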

\begin{proof}
``$\Rightarrow$'' 
Given an algebraic representation $h:R_+\times SO(2)\to GL(V_R)$, consider its complexification, conform Remark \ref{R:complexification}.
$C^*$ abelian implies the operators may be simultaneously diagonalized, i.e. 
there is a basis $ \{ v_i\}$ of $V_C$ of simultaneous eigenvectors
$h(z)v_i=\lambda_i(z)v_i$, for some characters $\lambda_i$.
Since $\lambda_i$ are assumed to be of polynomial type in $z,\bar{z}$, they are in fact monomials of the form $\lambda_i(z)=z^p\bar{z}^q$, for some $p,q$ depending on $i$ (index was omitted for simplicity of notation).

Now define the subspaces
$$V^{p,q}=\{v\in V_C : h(z)v=z^p\bar{z}^q v, \ z\in C^*.$$
By the above argument they span $V_C$, and have trivial intersection.
That that real part of the representation is real $h(t)=t^k$, ensures that
the decomposition is ``homogeneous of order $k$: 
the only non-zero eigenspaces are those with $p+q=k$.

Regarding complex conjugation requirement, since $h$ is a real representation complexified, the eigenspaces come in complex conjugate pairs: $ \overline{V^{p,q}} = V^{q,p}$ (just a linear algebra version of ``real coefficients polynomials have complex conjugate pairs of roots'').

This common spectral decomposition is by definition the Hodge decomposition (of weight $k$), of a  corresponding unique Hodge structure (conform HS Definition I, \S \ref{HS-definitions}).
Indeed, the Hodge lattice here is naturally associated with the chosen basis $ H_Z=Z\{v_i\}$.

``$ \Leftarrow$'' Conversely, given a rational Hodge structure on $V_Q$:
$$ V_Q\otimes_Q C = \oplus_{p+q} V^{p,q}, \quad \overline{V^{p,q}}=V^{q,p},$$
meaning that conjugation acts as identity on $V_{p,p}$, if it happens that $k=2p$ is even, and is an isomorphism:
$$ \bar{\ }:V^{p,q}\to V^{q,p}, p>q, \quad v\otimes1+w\otimes i\mapsto v\otimes1-w\otimes i.$$
In other words, for $p>q$ each $V^{p,q}=Re(p,q)\oplus i Im(p,q)$ decomposes into a direct sum of eigenspacees for conjugation, with corresponding eigenvalues $+1$ and $-1$ respectively,
with $Re(p,q)$ and $(Im(p,q)$ real vector subspaces of $V_R=V_Q\otimes R$.

Now we have the individual $C$-planes $V_p=Re(p,q)+i Im(p,q)$, for $p\ne q$, and a possibly additional ``purely real subspace'', invariant to conjugation $V^{p,p}$, if $k$ even.

With such an explicit structure, we can naturally define a $C^*$-``conformal action'' piece-wise on the ``upper-half'' blocks $p>q$:
$$h:C^*\to GL(V^{p,q}) : h(r e^{i\theta})=r^{2(p+q)} 
\begin{bmatrix}\cos(\theta) & \sin(\theta) \\ -\sin(\theta) & \cos(\theta)\end{bmatrix}^{p-q},$$
and mirror as conjugation on the lower blocks $V^{q,p}$, 
with a ``pure'' $r^{2k}$ action on the purely real subspace, if there is one.  \qed
\end{proof}
\begin{rem}
Note also that moreover, the above Hodge decomposition associated to such a real representation, is compatible with the Hodge lattice, i.e. it partitions the global basis into bases for the Hodge summands $V^{p,q}$.
\end{rem}

\begin{rem}
The parallel with Cartan method for classification of semisimple Lie algebras is poignant. 

The role of the Cartan subalgebra $h$ / maximal torus $T$ is played at the level of groups by $C^*$, maximal abelian in $SL(2,C)$, but not compact: 
$$ C^*=R_+ \times U(1) \to SL(2,C) \quad compare \ with \quad  U(1) \to SU(2,C),$$
allowing to simultaneously diagonalize the corresponding restriction to $h$ of the adjoint representation $ad_g$.
The characters $\lambda^{p,q}$ play the role of weights of a root system.
\end{rem}

\subsection{Miscellaneous considerations}
Of course the $SO(2)$ part of the representation $h$ on $C^*=R_+\times SO(2)$ is just the datum of a linear complex structure $J=h(i)$ on $V_R$. 

In conclusion, a pure Hodge structure on a vector space is a ``discrete variation'' of the usual complex structure, which is an extension of the real scalar multiplication to include rotations, to a group action of $C^*$ by ``conformal transformations'' with respect to an inner product (e.g. hermitean product, when considering a Kahler manifold), with a non-trivial action in the ``polar direction'': $h(N(z))v=N(z)^k v$.

\vspace{.1in}
When comparing the various definitions of Hodge structures, it is worth distinguishing the change of coefficients in two steps: from $Z->Z[i]$, i.e. the integral extension, and then $\cdot \otimes R$.

The appearance of such ``conformal representations'' is reminiscent of modular forms:
$$ f(\gamma(z))=(cz+d)k f(z), \ \gamma=\frac{az+b}{cz+d},$$
where the modular transformation $\gamma$ is an element of the mapping class group of the elliptic curves $SL(2,Z)$.

Here we are dealing with the maximal abelian subgroup (maximal torus) of (essentially) Mobius transformations $SL(2,C)$, i.e. its Cartan subalgebra.

How such algebraic representations are involved, as Hodge structures, is not apparent to the present author.

A ``mnemonic'' of the hierarchy of structures involved, is the following progression of ``enriching'' an abelian group with an action (module structure):
$$ Abelian\ group  \quad Real \ structure \quad Complex \ structure  \quad Hodge \ structure$$
\begin{equation}\label{D:structures}
\xymatrix @C=1pc{
Z \quad \ar@{^(->}[rr] & & \quad R_+ \quad \ar@{^(->}[rr] & & \quad C^*=R_+ \times SO(2) \quad \ar@{^(->}[rr] & & R_+ \times_k SO(2)
}\end{equation}

\subsection{Examples: Dolbeault Complexes}
In fact there is an underlying decomposition of the complexification of the exterior algebra
of differential forms, pertaining to the above algebraic considerations fiber-wise.
Since it is compatible with differentiation and {\em integrable} for complex manifolds, it leads to the Hodge decomposition of the Dolbeault complex.

Remaining as before at the linear algebra level \cite{RC-DiffGeom}), from the point of view of $C^*$-representations, the Dolbeault decomposition originates as follows.

It is enough to decompose differential 1-forms fiber-wise:
$$\Omega^1(X;C)=\Omega^{1,0}(X)+\Omega^{0,1}(X).$$
The corresponding spectral decomposition of $h_1\in GL(V_C)$, of weight $k=1$, is $z P_{E(i)}+\bar{z} P_{E(-i)}$, corresponding to the complexification.
The exterior algebra functor yields the higher weight decompositions $h_k=h_1^{\wedge^k}$.

\subsection{General Hodge Structures}
De Rham cohomology of a complex manifold decomposes in each degree $k$ as a Hodge structure of weight $k$; it is a direct sum of such Hodge structures.

Transferring to the linear algebra framework, we introduce the following equivalent formulation.
\begin{defin}
A {\em (general) rational Hodge structure} on a real vector space $V_R$ is a {\em polynomial $C^*$-action} $h:C^*\to Aut_R(V_C)$ on its complexification, compatible with complex conjugation
(Galois group equivariant).
\end{defin}
\begin{rem}
Again the ``lattice aspect is separated from this definition, as well as the complex conjugation aspects, in the process of changing coefficients from $Z$ or $Q$ to $R \to C$, which usually play a part together in the classical concept of Hodge structure.
\end{rem}
Such an action is a direct sum of Hodge structures of weights $k$, corresponding to the decomposition of a polynomial in $z$ and $\bar{z}$ into homogeneous components:
$$h(z,\bar{z})=\sum h_k(z,\bar{z})t^k., \quad h_k(z,\bar{z})=\sum_{p+q=k} z^p\bar{z}^q P_{V(p,q)},$$
as a spectral decomposition of $h_k$. 


\section{Polarization of Hodge Structures}
A polarization of a Hodge structure $(H_Z,V_Q)$\footnote{Here we will consider the lattice too.} plays both roles of metric and symplectic form, like a super-symmetric Hermitean structure: instead of $<,>=g+i\omega$, as for a Kahler manifold (or package, if considering just the linear algebra setup), $Q=g_{|H^+}\oplus \omega[1]_{|H^-}$, where $V_C:=V_Q\otimes_Q C=H^+\oplus H^-$ is the decomposition of a total Hodge structure in its even and odd weights parts. 

More precisely:
\begin{defin}
A {\em polarization on a Hodge structure $(H_Z, V_Q, h:C^*\to Aut_C(V_C))$} is a $Z$-bilinear form on $H_Z$ (Frobenius-like), such that its complexification satisfies the {\em Hodge-Riemann bilinear relations}:

(i) ``Orthogonality relations'': $Q$ is non-degenerate on $H^{p,q}\times H^{q,p}$ and zero otherwise;

(ii) ``Complex/Symplectic signature'' (Clifford?): $i^{p-q}Q>0$ on $V^{p,q}+V^{q,p}$.
\end{defin}
The Hodge decomposition may be presented in a slightly different way, in view of the Clifford algebra interpretation \cite{Geeman}, p.9.

With the polarized rational Hodge structure $(V_Z,Q,h)$ there is an associated Clifford algebra $Cl_Q(V_C)$, relative to the complexification of the lattice pairing $Q:V_Z\times V_Z\to Z$.
The Hodge decomposition into even and odd parts is the usual decomposition of the Clifford algebra as a $Z_2$-graded algebra:
$$Cl(Q)=C^+(Q)\oplus C^-(Q) =\oplus_k\oplus_{p+q=k} H^{p,q}.$$
In fact a Clifford algebra is also a Frobenius algebra \cite{Cliff-Frob}, and also a weak Hopf algebra \cite{Cliff-Hopf}, i.e. exhibiting a ``core algebraic structure'' suited for duality and much more (polarized Hodge structure).

In order to avoid general yet unsubstantiated considerations, we will exemplify the key connections between these algebraic structures in the case of Riemann surfaces.
This will explain, by example, the similarity with Cartan-Killing-Weyl presentation of semi-simple Lie algebras\footnote{The ``Lie algebra-to-bialgebra development of the theory leads to bialgebra quantization, Drinfeld doubles / Manin triples, other remarkable connections with the topics of Hodge structures and periods.}.

\subsection{The Elliptic Curves Example}
We will interpret the presentation of the moduli of polarized Hodge structures from \cite{Movasati:MHS}, 
with an emphasis on the representation theory approach to Hodge structures, combined with a root systems analogy when discussing the lattice data.

The theory of elliptic curves has analytic, topologic and algebraic aspects (Weierstrass, Riemann and Abel-Jacobi).

The Weierstrass family of elliptic curves $E(t_2,t_3): y^2=4x^3-t_2x+t_3$, with $(t_2,t_3)$ in the parameters domain $U_0$, corresponds on the algebraic side, via Weierstrass coordinates, to the torus $C/L$, the quotient via the lattice $L$ with periods $\omega=(\int_{\delta_1} dx/y, \int_{\delta_2} dx/y)$.
Here $\{\delta_i\}_{i=1,2}$ is a homological base for Betti homology $H^{Betti}_1(E(t),Z)$, with $t=(t_2,t_3)$, and $dx/y$ determines the basis of the de Rham cohomology $H_{dR}^1(E_t,C)$.

Now the natural Hodge decomposition $H^1_{dR}=H^{1,0}\oplus H^{0,1}$ is inherited from the Dolbeault complex.
Also the cup product provides a natural polarization on Betti cohomology lattice with dual basis $\delta_i^*$:
$$H^1(X_t,Z)\times H^1(E_t,Z)\to Z, \quad Q(a,b)=a\cup b,$$
$$\Rightarrow  \quad H^{Betti}_1(X,Z) \cong H^1(E_t,Z)^*\cong H^{Betti}_1(E_t,Z).$$
It is the integration pairing that yield the Betti- de Rham period isomorphism:
$$ P_{iso}:H^{Betti}_1(X,Z)^*\to H_{dR}^1(X,C).$$
and the non-trivial Hodge structure (decomposition) that encodes the periods of the algebraic torus,
from the algebraic side.
This is also defined as the period domain; when rotating in ``standard position'' with $\omega_1=1$ via an $SL(2,Z)$ transformation, the other period $\tau=\omega_2$ becomes the ``free'' affine parameter of the moduli space of the family of elliptic curves \cite{Movasati:MHS,Geeman}.

In conclusion, here we see the presence of a basis / dual basis (pairing, copairing), and the Hodge structure decomposition is determined by the ``gluing'' via an isomorphism of the {\em Frobenius form} type $A \to A^*$.
As mentioned before, there are various algebraic structures suited to be used for such a framework:
Clifford - Frobenius - Hopf algebras, with what we consider minor ``variations'' (conceptually), but yielding technically distinct details.

\subsection{From Killing-Cartan to Betti-de Rham-Hodge}
The theory of moduli of variations of polarized Hodge structures has several layers of complexity and ``incarnations'', author dependent.

The manifold theory of smooth, complex or projective theory involves, as already mentioned, the interplay between Topology, how the manifold is built, and the algebraic structures on fibers; it involves the so called {\em global period domain} (e.g. conform \cite{Intro-HS}), and representations of the fundamental group (additional comments will be given in the following sections).
Locally, on a trivialisation ``local model $\times$ fiber structure'', local period domain controls variations of the structure on the fibers, and it is a problem of integrability, e.g. from almost complex structures, structures defined on fibers, and complex structures, compatible with the local model.

It is the theory of linear Hodge structure, which is defined fiberwise, which can be comparred with the theory of seimi-simple Lie algebras (the fiber structure analog for groups), that helps clarify the main tools, concepts, tools and their roles in classifying polarized Hodge structures.
This analogy can also help generalize in a natural way Hodge structures, from Abelian to non-commutative.

A more detailed will appear elsewhere; for now, we will also present a dictionary:
$$\xymatrix{
Generators: & Killing-Cartan  & Polarized \ Hodge \ structures \\
& Root \ Systems & Lattice \ and\ Polarization \\
Variations: & Lie \ Theory & Deformation \ Theory
}$$
On the Lie algebra side, the choice of a Cartan subalgebra (maximal torus of ``periods'') determines the roots, as eigenvalue functionals; note that for semi-simple Lie algebras the inner product comes for ``free'', as the Killing form, and corresponds to the {\em real forms of the complexification}.

On the ``bialgebra side'' (we will eventually justify this elsewhere), the emphasis is on the representation theory (which in the Lie side is derived from the root system, Weyl group etc.), hence the lattice and the polarization are given first, as data encoding the ``hidden Frobenius duality'', and the simple roots geometric data (analog of Cartan matrix).
The Hodge structure, as a representation, is given from the beginning, encoding the hierarchy of extensions via filtrations or decompositions.
It is in some sense the analog of the Weyl action on the quiver of states, specifying the ``navigation'' via the creation and annihilation operators (the representation theory itself).

Thus the polarization is analogous to the Killing form associated to a real form of a Lie algebra, with associated Cartan involution and signature.
Note that the ``common denominator'' is the complexification, where the Hodge decomposition takes place.
The new aspect, pertaining to duality (``bialgebra quantization'') is the presence of its invariants, analogous to the invariants of an abelian group (discrete periods of the maximal torus).

\vspace{.1in}
This is a partial analogy, highlighting the differences also, allowing to complete and clarify the Hodge structure side accordingly.

It also justifies why a reasonable generalization of linear Hodge structure proceeds from $C^*$, the center of $SL(2,C)$ to this fundamental case of non-abelian and non-compact group.

Note that at the level of duality, we have:
$$\xymatrix{
         & \underline{Commutative} & \underline{Non-Commutative} \\
\underline{Compact} & Pontryagin & Tannaka-Krein \\ 
\underline{Non-compact}: & Hodge\ duality & nc.\ Hodge\ ...\ Mirror \ symmetry?
}
$$
\begin{rem}
The interplay between filtrations vs. grading, as a tower of extensions in the sense of Jordan-Holder Theorem and Hodge structures as representations, is quite instructive; alternatively it relates to the Rees construction / projectivity and Swan's Theorem, as mentioned later on in connection with the nc-Hodge structure from \cite{KKP}. 
\end{rem}

\section{Non-Commutative Hodge Structures}
Inspired by the nc-Hodge structure of \cite{KKP}, we will separate the Linear (fiber-wise) algebraic structure from Topological / integrability considerations,
and define a straight forward non-commutative almost Hodge structure, by extending the representation
of $C^*$ to algebraic representations of $SL(2,C)$ \ref{D:structures}, conform with the diagram from \S\ref{S:Intro}).

\begin{defin}
A {\em non-commutative Hodge structure} on a rational vector space  $V_Q$ is an algebraic representation $h:SL(2,C) \to GL(V_Q)$, such that its restriction to $C^*$ is a pure-rational Hodge structure.
\end{defin}
Polarizations of nc-Hodge structures will not be addressed at this stage, but a parallel with Killing forms of semis-simple Lie algebras, associated to Cartan involutions may constitute a guide in this direction.

Operations with such structures follow from linear algebra constructions \cite{Geeman}.

The lattice aspects of such linear nc-Hodge structures should be related to the modular group $SL(2,Z)$, in connection with the role of modular forms.

\section{Conclusions}\label{S:Conclusions}
Non-linear objects, like manifolds and associated functions, are studied via linear structures associated as fibers, e.g. tangent bundle etc.
The main idea is to capture the topology data used to glue and build the manifold, pairing Cech cohomology with the graded structures of the fibers, as in standard presentation of Hoge decomposition (coherrent sheaf cohomology)  $ H^k(X,C)=\sum_{p+q} H^p(X, \Omega^q)$.
Then various approaches, e.g. vector bundles, Higgs bundles, Yang-Mills Theory etc, generically called local systems, lead to representations of the fundamental group.

Some notable theorems provide ``bridging'' correspondences \cite{Wiki:NAHC}:
$$\xymatrix{
Stable \ Vector\ Bundles \dto^{K-H} \drto^{N-S} \rto^{S} & Higgs \ Bundles \dto^{C-S} & Modular\ Forms\\
Yang-Mills \ Theory \rto^{D} & \pi_1-representations\rto^{KKP} & nc-Hodge\ Theory
}
$$
The fact that non-abelain cohomology is well established in dimension 1, with its relation to the fundamental group (Hurewicz Theorem), was taken by Carlos Simpson as a ``motto'' for what nc-Hodge Theory is \cite{Simpson:HFnac}: ``{\em The study of nonabelian Hodge Theory may thus be thought of as the study of fundamental groups of algebraic varieties of compact Kahler maniffolds}''.

Returning to a more concrete level, we will end with a question for the reader.
If we just look for a ``linear non-commutative Hodge structure'', perhaps the extension from $Z[i]\to C^*$ to $SL(2;Z)\to SL(2,C)$ proposed above for a linear, pure, non-commutative Hodge structure, looks like a reasonable candidate; after all,
$SL(2,Z)$ is the analog of Mobius Transformations for the rational Riemann sphere
$P^1Z$ ... Then how do {\em modular forms} fit into this line of thought, at least in relation to the moduli space of elliptic curves, as their mapping class group?

\section{Acknowledgements}
The author thanks IHES for the wonderful research conditions provided during his recent visit, and for the helpful advice from Prof. Maxim Kontsevich; the loose ends and mistakes from this article are due to the author only, while trying to understand and study a new (for him) and exciting topic.



\begin{thebibliography}{33}



\bibitem{QM:Periods} Kevin Hartnett, ``Strange numbers found in particle accelerators'',
Quanta Magazine, Nov. 15, 2016, 
https://www.quantamagazine.org/strange-numbers-found-in-particle-collisions-20161115

\bibitem{Grassmann-complements} Erhard Scholz, ``From Grassmann complements to Hodge-duality'', 

\bibitem{KZ} M. Kontsevich and D. Zagier, Periods, IHES 2001,\\ 
https://www.maths.ed.ac.uk/~v1ranick/papers/kontzagi.pdf

\bibitem{Intro-HS} S. A. Filippini, H. Ruddat and A. Thomson,
An introduction to Hodge structures,
https://arxiv.org/pdf/1412.8499.pdf

\bibitem{Geeman} Bert van Geemen, ``Kuga-Satake varieties and the Hodge conjecture'',
https://arxiv.org/pdf/math/9903146.pdf

\bibitem{Voisin} Voisin, Hodge Theory and Complex algebraic geometry, 
Cambridge Studies in Advanced Mathematics, vol. 76, Cambridge University Press, 2007.

\bibitem{RC-DiffGeom} A. Louis, Ch.4 Holomorphic and real analytic differential geometry, 4.1 $C$-Linear algebra, 
https://mast.queensu.ca/\textasciitilde andrew/teaching/math942/pdf/1chapter4.pdf

\bibitem{KKP} L. Katzarkov, M. Kontsevich and T. Pantev, Hodge theoretic aspects of Mirror symmetry, https://arxiv.org/pdf/0806.0107.pdf

\bibitem{Wiki:NAHC} Wikipedia, Non-abelian Hodge Correspondence,
https://en.wikipedia.org/wiki/Nonabelian\_Hodge\_correspondence

\bibitem{Simpson:HB} C. Simpson, Higgs bundles and local systems,
http://www.numdam.org/article/PMIHES\_1992\_\_75\_\_5\_0.pdf

\bibitem{Simpson:HFnac} C. Simpson, Hodge filtration of nonabelian cohomology,
https://arxiv.org/pdf/alg-geom/9604005.pdf

\bibitem{Corlette} K. Corlette,
``Flat G-bundles with canonical metrics'', Journal of Differential Geometry. 28 (3): 361–382. 

\bibitem{Conf-Geom} M. Shottenloher,
``Ch.2 Conformal Geometry'',
http://www.mathematik.uni-muenchen.de/\textasciitilde schotten/LNP-cft-pdf/02\_978-3-540-68625-5\_Ch02\_23-08-08.pdf

\bibitem{Cliff-Frob} Zbigniew Oziewicz and William Page, 
``Clifford algebra is Frobenius algebra - in one way only?'',
Advances in Applied Clifford Algebras, January 2012.
https://www.academia.edu/1431016/

\bibitem{Cliff-Hopf} Daniel Bulacu,
``A Clifford algebra is a weak Hopf algebra in a suitable
symmetric monoidal category'', 
Journal of Algebra, Volume 332, Issue 1, 15 April 2011, Pages 244-284.

\bibitem{Movasati:MHS} H. Movasati, ``Moduli of polarized Hodge structures'',
math.AG/0802.3118, 2008.



\end{thebibliography}
\end{document}